\def\leq{\leqslant}
\def\geq{\geqslant}
\def\le{\leqslant}
\def\ge{\geqslant}
\newcommand{\f}{\frac}
\theoremstyle{plain}
\newtheorem{theorem}{Theorem}[section]
\newtheorem{proposition}{Proposition}[section]
\newtheorem{lemma}[proposition]{Lemma}
\theoremstyle{remark}
\numberwithin{equation}{section}
\begin{document}
	
	\title[Anisotropic quadratic equations in three variables]  
	{Anisotropic quadratic equations \\ in three variables\\ 
		{\tiny\rm To the memory of John Coates}} 
	
	\author{Jiamin Li \& Jianya Liu}
	
	\address{%
		Jiamin Li
		\\
		Mathematical Research Center
		\\
		Shandong University
		\\
		Jinan, Shandong 250100
		\\
		P. R. China
	}
	\email{lijiamin@mail.sdu.edu.cn}
	\address{%
		Jianya Liu
		\\
		School of Mathematics \& Mathematical Research Center 
		\\
		Shandong University
		\\
		Jinan, Shandong 250100
		\\
		P. R. China
	}
	\email{jyliu@sdu.edu.cn}

\date{\today}
	
\begin{abstract}
		Let $f(x_1, x_2, x_3)$ be an indefinite anisotropic integral quadratic form with determinant $d(f)$, 
		and $t$ a non-zero integer such that $d(f)t$ is square-free. It is proved in this paper that, 
		as long as there is one integral solution to $f(x_1, x_2, x_3) = t$,  
		there are infinitely many such solutions for which (i) $x_1$ has at most $6$ prime factors, and (ii) the product $x_1 x_2$ 
		has at most $16$ prime factors. Various methods, such as algebraic theory of quadratic forms, harmonic analysis, Jacquet-Langlands  theory, 
		as well as combinatorics, interact here, and the above results come from applying the sharpest known bounds towards 
		Selberg’s eigenvalue conjecture. Assuming the latter the number $6$ or $16$ may be reduced to $5$ or $14$, respectively. 
\end{abstract}
	
\keywords{quadratic form, prime, almost-prime, weighted sieve}

	\maketitle
	
	\section{Introduction}
	
	\subsection{Quadratic equations in primes}  
	Let $f({\bf x})=f(x_1, x_2, \ldots, x_s)$ be an integral indefinite quadratic form in $s\geq 3$ variables, and $t$ an integer, 
	where here and throughout bold face letters denote vectors whose dimensions are clear from the context. 
	The question whether the equation 
	\begin{equation}\label{fxs=t} 
	f(x_1, x_2, \ldots, x_s)=t 
	\end{equation} 
	has prime solutions is hard in general. The assumption 
	that $f$ is indefinite is just for the ease of presentation; it is obvious how to modify the statements in this paper for positive/negative 
	definite quadratic forms $f$. In this direction, Hua \cite{Hua} proved in 1938 that if $f$ is diagonal 
	and $s\geq 5$ then \eqref{fxs=t} is solvable in primes, provided there is no local obstructions. 
	For general $f$ that may have cross terms, it is proved much later in \cite{Liu} that \eqref{fxs=t} is solvable in primes 
	provided that $s\geq 10$ and there is no local obstructions. The above bound $10$ has subsequently been 
	reduced to $9$ by Zhao \cite{Zha}, and then to $8$ by Green \cite{Gre}.  
	
	As for the least number of variables needed so that \eqref{fxs=t} is solvable in primes, the optimal 
	conjecture is $s=3$; thus \eqref{fxs=t} reduces to 
	\begin{equation}\label{fx3=t}  
	f(x_1, x_2, x_3)=t.  
	\end{equation} 
	Under the general local-global principle of Bourgain, Gamburd, and Sarnak \cite{BouGamSar}, 
	it is conjectured in \cite{LS10} that \eqref{fx3=t} should be solvable in primes 
	provided there is no local obstructions. 
	This conjecture seems, however, much harder than the twin prime conjecture, and therefore at the present stage of knowledge one can only hope to get almost-prime solutions to \eqref{fx3=t}.

	For $f({\bf x})=f(x_1,x_2,x_3)$ as in \eqref{fx3=t} and $t$ a non-zero integer, we denote the affine quadric
	$$
	V=V_{f,t} =\{{\bf x}: f({\bf x})=t\}, 
	$$
	and denote by $V(\Bbb Z)$ the set of integer points on $V$, that is 
	$$
	V(\Bbb Z)=\{{\bf x}\in \Bbb Z^3: f({\bf x})=t\}. 
	$$ 
	For $d\geq 1$,
	let
	\begin{equation} 
	V(\Bbb Z/d\Bbb Z)=\{{\bf x}\in (\Bbb Z/d\Bbb Z)^3: f({\bf x})\equiv t(\bmod\ d)\},
	\end{equation} 
	and
	\begin{equation} 
	V^0(\Bbb Z/d\Bbb Z)=\{{\bf x}\in V(\Bbb Z/d\Bbb Z): x_1x_2x_3\equiv 0(\bmod\ d)\}.
	\end{equation} 
	A prime $p$ is {\it bad} or {\it ramified} if 
	\begin{equation} 
	V^0(\Bbb Z/p\Bbb Z)=V(\Bbb Z/p\Bbb Z).
	\end{equation} 
	It is proved in \cite{LS10} that these bad primes are in the set 
	\begin{equation}\label{def/B} 
	B:=\{2,3,5,7\}. 
	\end{equation} 
	We say $x\in P_{r}(B)$ if $x$ has at most $r$ prime factors outside $B$, and we simply 
	write $P_{r}$ instead of $P_{r}(B)$ if there is no such a set $B$. 
	
	\medskip 
	
	Before going further, we must distinguish two cases: $f$ isotropic or anisotropic. 
	For isotropic $f$, there are non-constant polynomial parameterizations
	of points in $V(\Bbb Z)$. That is, there are morphisms $Q=(Q_1, Q_2, Q_3)$ with each $Q_j$
	an integral polynomial in one variable for which $Q(y)\in V(\Bbb Z)$
	if $y\in \Bbb Z$. Treatment of the anisotropic and isotropic cases are different, and will be summarized in 
	\S1.2 and \S1.3, respectively. 
	The purpose of this paper is to investigate almost-prime solutions to \eqref{fx3=t} for anisotropic 
	$f$. 
	
	\subsection{The anisotropic case} 
	Assume that $f$ is anisotropic over $\Bbb Q$, that is 
	there are no non-constant integral polynomial morphisms of the line into $V$. 
	Assume that $t d(f)$ is square-free, where $d(f)$ is the determinant of $f$. 
	Then $V(\Bbb Z)\not=\emptyset$ if and only if $f({\bf x}) \equiv t(\bmod\ d)$ is solvable for every $d\geq 1$.  
	This is a consequence of Siegel's mass formula \cite{Sie} for $f$, as well as the fact that such an $f$ only has one class in 
	its genus \cite[page 203]{Cas}.  We assume in the following that $V(\Bbb Z)\not=\emptyset$.

	Blomer and Br\"udern \cite{BloBru} have investigated \eqref{fx3=t} for diagonal anisotropic $f$; 
	they show that \eqref{fx3=t} has solutions ${\bf x}$ with each $x_j \in P_{521}$ for $t$ large and 
	satisfying suitable congruence conditions. 
	
	The main results of this paper are the following two theorems, in which just one or two variables are required to 
	be almost-primes. 
	
	\begin{theorem}\label{Thm1} 
		Let $f$ be an integral indefinite anisotropic quadratic form in $3$ variables, and $t$ a non-zero integer. 
		Assume that $t d(f)$ is square-free, where $d(f)$ is the determinant of $f$. 
		Let $B$ be as in \eqref{def/B}. Then the set 
		$$ 
		\{{\bf x} \in V(\mathbb{Z}): x_1\in P_{6}(B)\} 
		$$ 
		is Zariski dense in $V$. Assuming Selberg's eigenvalue conjecture, the above $6$ can be reduced to $5$. 
	\end{theorem}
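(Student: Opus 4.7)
The plan is to realise $V(\mathbb{Z})$ as a single orbit of an arithmetic group, prove a level-of-distribution statement for a counting function restricted by a congruence on $x_1$, and then feed the result into a one-dimensional weighted sieve.

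\emph{Step 1 (orbit setup).} The assumption that $td(f)$ is square-free together with the class-equals-genus fact already cited from \cite{Cas, Sie} gives $V(\mathbb{Z})=\Gamma\cdot v_0$ for a fixed base point $v_0$, where $\Gamma$ is commensurable with $\mathrm{Spin}(f)(\mathbb{Z})$. Anisotropy of $f$ identifies $\mathrm{Spin}(f)$ with the norm-one group of a quaternion division algebra $B/\mathbb{Q}$, and indefiniteness forces $B$ to be split at infinity, so the embedding $B\otimes\mathbb{R}\cong M_2(\mathbb{R})$ realises $\Gamma$ as a cocompact arithmetic subgroup of $\mathrm{SL}_2(\mathbb{R})$ acting on $\mathbb{H}^2$.

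\emph{Step 2 (level of distribution).} Set
\[
N(X)=\#\{\gamma\in\Gamma:\|\gamma v_0\|\le X\},\qquad N_d(X)=\#\{\gamma\in\Gamma:\|\gamma v_0\|\le X,\ d\mid(\gamma v_0)_1\},
\]
and write $N_d(X)=\beta(d)N(X)+E(d,X)$ with $\beta(d)$ a multiplicative local density obtained from the point counts $\#V(\mathbb{Z}/d\mathbb{Z})$. I would obtain the main term by spectrally expanding the indicator of the congruence class on a suitable congruence cover of $\Gamma\backslash\mathbb{H}^2$ and isolating the constant function. The cuspidal Maass contributions, controlled by the spectral gap, transfer via Jacquet--Langlands to classical newforms on $\mathrm{GL}_2/\mathbb{Q}$, where the Kim--Sarnak bound $|s|\le 7/64$ applies unconditionally and $s=0$ holds under Selberg. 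Combining Weyl's law on the quaternionic surface with these bounds should produce
\[
\sum_{\substack{d\le D\\(d,B)=1}}\bigl|E(d,X)\bigr|\ll N(X)(\log X)^{-A}
\]
with $D=N(X)^{\vartheta}$, for an explicit $\vartheta$ depending on the available spectral gap.

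\emph{Step 3 (weighted sieve on $x_1$).} Let $\mathcal{A}=\{(\gamma v_0)_1:\gamma\in\Gamma,\ \|\gamma v_0\|\le X\}$, counted with multiplicity. Because only one coordinate is sifted, the problem has sieve dimension $\kappa=1$, and the Richert--Greaves weighted sieve applies directly, with the bad primes $B$ of \eqref{def/B} removed in the usual way. Plugging in the $\vartheta$ from Step~2, an optimisation of the weight parameters translates the Kim--Sarnak exponent $7/64$ into the value $r=6$ and the Selberg hypothesis into $r=5$, yielding $\gg N(X)/\log X$ points $x\in V(\mathbb{Z})$ with $x_1\in P_r(B)$.

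\emph{Step 4 and main obstacle.} For Zariski density, observe that $V$ is an irreducible affine surface, so any proper closed subvariety is a finite union of curves; integer points of height at most $X$ on such curves grow strictly slower than $N(X)/\log X$, so the sieve output is not contained in any proper closed subvariety and hence is Zariski dense. The decisive difficulty is Step~2: producing a power-saving error, uniform for $d$ up to a genuine positive power of $N(X)$, requires careful bookkeeping of Jacquet--Langlands transfers, of ramified and old contributions, and of the dependence of implied constants on the congruence level. Converting the exact bound $7/64$ into a sieve-ready $\vartheta$, and then optimising in Step~3 to hit the numerical values $r=6$ and $r=5$, is the point where the harmonic-analytic and combinatorial threads of the abstract must be tied together.
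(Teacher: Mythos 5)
Your overall strategy coincides with the paper's: realise the solutions as an orbit of the unit group of the quaternion algebra attached to $f$, acting as a cocompact lattice in $SL_2(\Bbb R)$; prove equidistribution modulo $d$ with an error controlled by the spectral gap, transferred by Jacquet--Langlands and bounded by Kim--Sarnak (resp.\ Selberg); then run a one-dimensional weighted sieve on the coordinate $x_1$ and conclude Zariski density by comparing the lower bound $\gg X/\log X$ with the count of integer points on proper subvarieties. However, there are two genuine gaps. First, your Step 1 is wrong as stated: class number one for the genus of $f$ gives the local--global statement for representability, not transitivity of $\Gamma$ on $V(\Bbb Z)$; strong approximation fails for the ternary quadric (Borovoi), and the paper accordingly decomposes $V(\Bbb Z)$ into finitely many orbits ${\bf y}^{(j)}\Gamma$ and sieves on a fixed orbit $\mathscr{O}={\bf y}\Gamma$, with the counting done against smooth weights $F_T$ (which also sidesteps the issue that your $N_d(X)$, counting group elements $\gamma$ rather than points, can be distorted or even infinite when the stabilizer of $v_0$ in $\Gamma$ is infinite). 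One must also dispose of the contribution of points with $x_1=0$ (the paper's Lemma \ref{a0tUpper}) and verify $X\asymp T$ (Lemma \ref{antUpper}) before the sieve normalisation makes sense.

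Second, and more seriously, the quantitative core is asserted rather than proved. The paper's equidistribution (Lemma \ref{fO/dens}) gives the individual error $O_\varepsilon(d^{1+\varepsilon}T^{1/2+\theta+\varepsilon})$, whence the level of distribution $\tau=\tfrac14-\tfrac{\theta}{2}=\tfrac{25}{128}$ unconditionally and $\tau=\tfrac14$ under Selberg; you leave $\vartheta$ unspecified. With $\tau=25/128$ the passage to $r=6$ is extremely tight: the paper needs the Diamond--Halberstam weighted sieve (in its $\kappa=1$ form), combined with the thrice-iterated linear sieve functions $F$ and $f$ of Halberstam--Heath-Brown--Richert on the ranges $s\le 7$ and $s\le 8$, and a specific choice of parameters ($a=1$, $b=6.6$) to obtain the condition $r>5.996$ --- a margin of only $0.004$ below $6$ (and $r>4.676$ for the conditional value $5$ with $a=1$, $b=7$). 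Your appeal to a ``Richert--Greaves weighted sieve'' with an unspecified ``optimisation of the weight parameters'' does not establish that the threshold falls below $6$ (resp.\ $5$); with cruder weights or with the first-approximation linear sieve functions the same $\tau$ would land above $6$. Since the exact numerical values $6$ and $5$ are the content of the theorem, this computation --- the choice of sieve device, of the parameters $u,v$ (equivalently $a,b$), and the evaluation of the resulting integrals --- must be carried out, not presumed.
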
 
	
	We need a few words to clarify the meaning of Selberg's eigenvalue conjecture that appears in the 
	statement of above theorem. Let $\Gamma$ be a co-compact lattice in $SL_2(\Bbb R)$, and $\Gamma(q)$ 
	the principal congruence subgroup of $\Gamma$ of
	level $q\geq 1$.  Let $\Bbb H$ be the upper half-plane, and $\lambda_1$ the first eigenvalue of the Laplacian
	on the hyperbolic surface $\Gamma(q)\backslash \Bbb H$. 
	Selberg's eigenvalue conjecture \cite{Sel} 
	states that 
	$$
	\lambda_1\left(\Gamma(q)\backslash \Bbb H\right)\geq \f{1}{4}. 
	$$
	There are non-trivial bounds of the form 
	$$
	\lambda_1\left(\Gamma(q)\backslash \Bbb H\right)\geq \f{1}{4}-\theta^2 
	$$
	towards the conjecture, and the best result to-date is $\theta=7/64$ due to Kim and Sarnak 
	\cite{KimSar}. See \S2 on how these bounds are used in this paper.  
	
	\begin{theorem}\label{Thm2} 
		Let $f$ be an integral indefinite anisotropic quadratic form in $3$ variables, and $t$ a non-zero integer. 
		Assume that $t d(f)$ is square-free, where $d(f)$ is the determinant of $f$. Let $B$ be as in \eqref{def/B}. 
		Then the set 
		$$ 
		\{{\bf x} \in V(\mathbb{Z}): x_1x_2\in P_{16}(B)\} 
		$$ 
		is Zariski dense in $V$. On Selberg's eigenvalue conjecture, the above $16$ can be replaced by $14$. 
	\end{theorem}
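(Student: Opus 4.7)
The plan is to mirror the sieve argument behind Theorem \ref{Thm1}, but with the sifted variable taken to be the product $x_1x_2$; the effective change is that the natural sieve dimension jumps from $\kappa=1$ to $\kappa=2$, which is exactly what degrades the numerics from $P_{5}/P_{6}$ to $P_{14}/P_{16}$.

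First I would reduce to a single orbit. By the observation recorded in \S1.2 (Siegel's mass formula plus the one-class-per-genus fact for anisotropic $f$ with $td(f)$ squarefree), $V(\Bbb Z)$ is a finite, non-empty union of orbits of the integral spin group $\Gamma_f := \mathrm{Spin}_f(\Bbb Z)$, and each such orbit is Zariski dense in $V$. So it suffices to produce infinitely many $\mathbf{x}$ with $x_1x_2 \in P_{16}(B)$ inside a fixed orbit $\mathcal O$. I would then pick a smooth radial weight $\omega$ and set
$$
\mathcal A(X) = \{\mathbf{x} \in \mathcal O : \omega(\mathbf{x}/X) \neq 0\},\qquad \mathcal A_d(X) = \{\mathbf{x} \in \mathcal A(X) : d \mid x_1x_2\}.
$$
Because $\Gamma_f$ is cocompact in $\mathrm{SO}_f(\Bbb R)$, matrix-coefficient decay yields $|\mathcal A(X)| \sim c_f X$, and for a prime $p \notin B$ a direct count in $V(\Bbb Z/p\Bbb Z)$ shows $|\mathcal A_p(X)|/|\mathcal A(X)| = 2/p + O(1/p^{2})$, so the associated multiplicative density $g$ is a sieve density of dimension $\kappa=2$ (the jump from the $\kappa=1$ of Theorem \ref{Thm1} is entirely due to the two hypersurface conditions $p\mid x_1$ and $p \mid x_2$ both contributing to $p \mid x_1x_2$).

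The crux is then a level-of-distribution estimate
$$
\sum_{\substack{d \le X^\alpha \\ (d,B)=1,\ \mu^2(d)=1}} \bigl| |\mathcal A_d(X)| - g(d)\, |\mathcal A(X)| \bigr| \ \ll_A\ \frac{|\mathcal A(X)|}{(\log X)^A},
$$
with $\alpha$ as large as possible. One splits $d=d_1d_2$, with $d_1$ collecting the primes forced to divide $x_1$ and $d_2$ those forced to divide $x_2$, and each resulting congruence count is controlled by the spectral expansion on the principal congruence cover $\Gamma_f(d)\backslash \Bbb H$. Via the Jacquet--Langlands transfer mentioned in \S2, the contribution of small exceptional eigenvalues is bounded by the Kim--Sarnak exponent $\theta=7/64$, which in turn determines the admissible $\alpha$; replacing $\theta=7/64$ by the conjectural $\theta=0$ yields a strictly larger $\alpha$.

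With the triple $(g,\alpha,\kappa=2)$ in hand, I would feed it into the Diamond--Halberstam--Richert weighted sieve of dimension two (with Greaves-type weights tailored to bound $\Omega_{B}(x_1x_2)$ rather than $\Omega_{B}(x_1)$). The numerical optimisation, carried out exactly as in the one-variable case but with $\kappa=2$, produces $\mathbf{x} \in \mathcal A(X)$ with $x_1x_2 \in P_{16}(B)$ under Kim--Sarnak, and with $x_1x_2 \in P_{14}(B)$ under Selberg. Letting $X\to\infty$ and invoking Zariski density of $\mathcal O$ completes the proof. The principal obstacle is the level of distribution: pushing $\alpha$ right up to what $\theta=7/64$ allows requires a \emph{uniform} treatment of the Maass-form spectrum across all covers $\Gamma_f(d)\backslash\Bbb H$ for $d \le X^\alpha$, with explicit control of exceptional eigenvalues and of every cusp; the two-variable condition $d\mid x_1x_2$ does not by itself worsen $\alpha$, but it forces an extra combinatorial layer inside the sieve optimisation, and it is precisely the $\kappa=2$ dimension that accounts for the gap between $P_{6}$ and $P_{16}$.
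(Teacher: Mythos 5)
Your overall route is the paper's: reduce to a fixed orbit $\mathscr{O}={\bf y}\Gamma$ of the integral spin group, count with smooth radial weights, prove equidistribution of the orbit modulo $d$ with a level of distribution governed by the spectral gap (Jacquet--Langlands plus Kim--Sarnak $\theta=7/64$, respectively $\theta=0$ under Selberg), note that the condition $p\mid x_1x_2$ has local density $2/p+O(1/p^2)$ so that the sieve dimension is $\kappa=2$, and finish with the Diamond--Halberstam weighted sieve. That is exactly the structure of the paper's argument for Theorem \ref{Thm2}, which imports the analytic input from \S2 and \cite{LS10}; the only technical difference is that the paper does not recompute $F_2,f_2$ on intervals (nor use Greaves-type weights), but instead bounds the sieve integral by the Halberstam--Richert inequality \cite[(10.1.10), (10.2.4), (10.2.7)]{HR74} with $\beta_2\approx 4.2665$ from \cite{DiaHalRic}.

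There is, however, one genuine gap in how you account for the numerics. In Lemma \ref{Weight} the admissible $r$ depends not only on $(\kappa,\tau)$ but also on the parameter $\mu$ defined by $\max_{a_n\in\mathscr{A}} n\le X^{\tau\mu}$, i.e.\ on the \emph{size of the numbers being sifted}. For Theorem \ref{Thm1} one sifts $n=|x_1|\ll T\asymp X$, so $\tau\mu=1$; for Theorem \ref{Thm2} one sifts $n=|x_1x_2|\ll T^2\asymp X^2$, so $\tau\mu=2$, i.e.\ $\mu=2/\tau=10.24$ unconditionally and $\mu=8$ under Selberg. This doubling enters \eqref{Thm2r>} through the term $(1+\zeta)\mu-1$ and is responsible for a large share of the jump from $6$ to $16$: if one ran the optimization ``exactly as in the one-variable case but with $\kappa=2$,'' keeping $\tau\mu=1$, the minimum of $m(\zeta)$ would come out near $9.5$, i.e.\ one would (incorrectly) conclude $r=10$ rather than $16$. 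So your closing claim that ``it is precisely the $\kappa=2$ dimension that accounts for the gap between $P_6$ and $P_{16}$'' is not right, and as written your sieve step does not actually produce the exponents $16$ and $14$; you must additionally record that the sifted quantity $x_1x_2$ has size up to $X^2$ and feed $\tau\mu=2$ into the weighted sieve. Once that is added, your argument coincides with the paper's.
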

	
	The above theorems can be viewed as continuation of the research in \cite{LS10} where all the three variables are  
	almost-primes. To be more precise, the main theorem in \cite{LS10} states that the set 
	$$ 
	\{{\bf x} \in V(\mathbb{Z}): x_1x_2x_3\in P_{26}(B)\} 
	$$ 
	is Zariski dense in $V$, and assuming Selberg's eigenvalue conjecture, one has $22$ in place of $26$. 
	
	We follow the framework in \cite{LS10}, where interact various methods, such as algebraic theory of quadratic forms, 
	harmonic analysis, Jacquet-Langlands theory, as well as combinatorics. We define an archimedean height to order the points, and we sieve
	on the orbit on the homogeneous space $V$. 
	The groups appear in the analysis are congruence
	subgroups of the spin group, and therefore non-trivial bounds 
	towards Selberg's eigenvalue conjecture are crucial. The dimensions of 
	the sieves in Theorems \ref{Thm1} and \ref{Thm2} are $1$ and $2$, respectively.  
	Both Theorems \ref{Thm1} and \ref{Thm2} are proved finally by the weighted sieves of 
	Diamond and Halberstam \cite{DiaHal} or \cite{DiaHal08}. 
	To complete the non-linear sieves in Theorems \ref{Thm2}, 
	we need Halberstam and Richert \cite[Chapter 10]{HR74}, in addition to the above two papers.  
	
	\subsection{The isotropic case}  
	The isotropic case can be treated differently since there are non-constant polynomial parameterizations
	of points in $V(\Bbb Z)$. In the case of isotropic $f({\bf x})=x_1^2+x_2^2-x_3^2$ and $t=0$, 
	the Pythagorean equation 
	\begin{equation}\label{Pyt} 
	x_1^2+x_2^2-x_3^2=0
	\end{equation} 
	has parametrized solutions 
	\begin{equation}\label{Pyt/Sol} 
	x_1=m^2-n^2, \ 
	x_2=2mn, \ 
	x_3=m^2+n^2 
	\end{equation} 
	where $m, n\in \Bbb Z$. Diamond and Halberstam \cite{DiaHal} show that there are infinitely many
	Pythagorean triples for which $x_1x_2x_3\in P_{17}$. In their proof, they fix $n=1$ and 
	apply a $2$-dimensional sieve to $m$ 
	in \eqref{Pyt/Sol}, and hence their solutions are not Zariski dense in $V$.  
	Using similar arguments, one can do the same for $V_{f,t}$ when $f$ is isotropic. 
	
	Let $G = SO_f(\Bbb R)$ be the real special orthogonal group of $3\times 3$ matrices preserving the Pythagorean quadratic form
	$$
	f({\bf x})= x_1^2 + x_2^2 - x_3^2. 
	$$ 
	Let $\Gamma < G(\Bbb Z)$ be a finitely generated subgroup of the integer matrices in $G$, 
	and assume $\Gamma$ is non-elementary, or equivalently, that its Zariski closure is $SO_f$. 
	For a fixed Pythagorean triple, for example ${\bf y} = (3, 4, 5)$, we form the orbit
	$$
	\mathscr{O}={\bf y}\Gamma. 
	$$
	The group $\Gamma$ and hence the orbit $\mathscr{O}$ are allowed to be thin, but not too thin. Put 
	\begin{eqnarray}
	\left\{
	\begin{array}{lll}
	g_1({\bf x})=x_3, \\
	g_2({\bf x})=\frac{1}{12}x_1x_2, \\
	g_3({\bf x})=\frac{1}{60}x_1x_2x_3. 
	\end{array}
	\right.
	\end{eqnarray}
	For each $j=1, 2, 3,$ one wants to find $r_j$ as small as possible such that the set 
	$$
	\{{\bf x}\in \mathscr{O}: g_j({\bf x})\in P_{r_j}\} 
	$$ 
	is Zariski dense in the Zariski closure of $\mathscr{O}$. This has been done  
	in a series of papers such as Kontorovich \cite{Kon09}, 
	Kontorovich and Oh \cite{KO12}, 
	Bourgain and Kontorovich \cite{BK15}, 
	Hong and Kontorovich \cite{KH15}, and Ehrman \cite{E19}. 
	In particular the following results have been achieved: 
	$r_1=4$ by \cite{BK15} and \cite{E19}; $r_2=19$ and $r_3=28$ by \cite{E19}. 
	
	It is interesting to compare these with Theorems \ref{Thm1} and \ref{Thm2} above, as well as that 
	in \cite{LS10}.  
	
	\section{Equidistribution modulo $d$} 
	
	The main task of this section is to prove an equi-distribution theorem for one of the variables in the 
	framework of \cite{LS10}, and we note that the situation for two variables is similar. 
	
	\subsection{The orbits}
	Let $f$ and $t$ be as in Theorem~\ref{Thm1}. Denote by $SO_f$ the special 
	orthogonal group of $3\times 3$ matrices which preserve $f$. It is a linear algebraic group defined over $\Bbb Q$.
	As in Cassels \cite{Cas}, denote by $G$ the spin double cover of $SO_f$. Then $G$ is defined over $\Bbb Q$ and, since $f$ is anisotropic, $G$ consists of
	the elements of norm $1$ in a quaternion division algebra $D_f$ over $\Bbb Q$. Also there exists a morphism $\tau: G\to SO_f$; the reader is referred to \cite[\S2]{LS10} for an exact definition of $\tau$. 
	
	Let $\Gamma$ be the unit group of integral quaternions. For simplicity we may drop 
	$\tau$ and write the action of $\gamma\in \Gamma$ on ${\bf x}=(x_1,x_2,x_3)$
	by 
	$$
	{\bf x}\mapsto {\bf x}\gamma. 
	$$ 
	Now $\tau(\Gamma)\subset SO_f(\Bbb Z)$ and $V(\Bbb Z)$
	decomposes into finitely many $\Gamma$ orbits
	\begin{eqnarray}
	V(\Bbb Z)=\bigsqcup_{j=1}^h {\bf y}^{(j)}\Gamma.
	\end{eqnarray}
	Note that the quadric $V(\Bbb Z)$ in $3$-variables does not satisfy strong approximation; 
	the reader is referred to Borovoi \cite{Bor}.  
	So we will conduct our analysis on each orbit ${\bf y}^{(j)}\Gamma$ separately, and we denote by 
	\begin{eqnarray}\label{def/OO}
	\mathscr{O}={\bf y}\Gamma 
	\end{eqnarray}
	for such a fixed orbit. 
	
	
	\medskip 
	
	The analysis on \eqref{def/OO} is as follows. 
	Let $K$ be a maximal compact subgroup of $SO_f(\Bbb R)$, and let $|\cdot |$ be a 
	Euclidean norm on $\Bbb R^3$ which is $K$ invariant. To do the counting on $\mathscr{O}$, 
	we also need good weight functions, and these have already been constructed in \cite{LS10}, 
	which are smooth functions $F_T: {\Bbb R}^3\to \Bbb R$ for $T\geq 10$ that 
	depend only on $|{\bf x}|$ and satisfy
	\begin{eqnarray}
	\left\{
	\begin{array}{lll}
	0\leq F_T({\bf x})\leq 1, \\
	F_T({\bf x})=1, & \mbox{if } |{\bf x}|\leq T/c_0,  \\
	F_T({\bf x})=0, & \mbox{if } |{\bf x}|\geq c_0 T.  
	\end{array}
	\right.
	\end{eqnarray}
	Here $c_0$ is a positive constant depending only on $\mathscr{O}$. 
	With these weight $F_T$ we can define, for $n\geq 0$, 
	\begin{eqnarray}\label{anT}
	a_n(T)=\sum_{{\bf x}\in \mathscr{O} \atop x_1=\pm n}F_T({\bf x}).
	\end{eqnarray}
	Plainly $a_n(T)\geq 0$, and $a_n(T)=0$ if $n\geq c_0 T$. We remark that the $a_n$ here 
	has different meaning as that in \cite{LS10}. 
	
	\begin{lemma}\label{a0tUpper}
		We have 
		\begin{eqnarray*}\label{a0T}
			a_0(T)\ll_{f}\log (c_{0}T).
		\end{eqnarray*}
	\end{lemma}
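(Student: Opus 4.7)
\medskip

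\noindent\textbf{Proof proposal.} The plan is to reduce the count defining $a_0(T)$ to a binary integral solution count and then invoke the classical Pell-type bound. Since $0\le F_T\le 1$ and $F_T$ is supported in $|{\bf x}|\le c_0 T$, and since $\mathscr{O}\subset V(\Bbb Z)$, we have
$$
a_0(T)\;\le\;\#\{{\bf x}\in V(\Bbb Z):x_1=0,\ |{\bf x}|\le c_0 T\}.
$$
Setting $x_1=0$ in $f({\bf x})=t$ reduces the problem to counting integer solutions $(x_2,x_3)$, of Euclidean length at most $c_0 T$, of the binary equation
$$
g(x_2,x_3):=f(0,x_2,x_3)=t.
$$
Thus the lemma will follow if I show $\#\{(x_2,x_3)\in\Bbb Z^2:g(x_2,x_3)=t,\ x_2^2+x_3^2\le (c_0T)^2\}\ll_f \log(c_0 T)$.

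The key observation is a dichotomy for the restriction $g$. Since $f$ is anisotropic over $\Bbb Q$, $g$ cannot be isotropic over $\Bbb Q$: a non-trivial zero of $g$ would yield a non-trivial zero $(0,x_2,x_3)$ of $f$, contradicting anisotropy. Hence $g$ is either (i) definite, or (ii) indefinite and anisotropic.

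In case (i), the real level set $\{g(x_2,x_3)=t\}$ is compact (an ellipse), so there are only $O_f(1)$ integer points on it, which is absorbed in the bound $O(\log(c_0T))$. In case (ii), $g$ is an integral indefinite anisotropic binary quadratic form, so $g(x_2,x_3)=t$ is a (generalised) Pell equation. Its integer solutions split into finitely many orbits under the action of the infinite cyclic group of units of norm $1$ of the associated real quadratic order; on each orbit the solutions grow geometrically, so the number of those of height at most $X$ is $O_f(\log X)$. Applying this with $X=c_0 T$ yields the desired bound. Combining both cases gives $a_0(T)\ll_f \log(c_0 T)$.

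The main (and only) potentially delicate step is the Pell case: one has to make sure the argument treats $t\ne 0$ correctly, that the finitely many orbit representatives have size $\ll_{f,t}1$, and that the growth rate of the fundamental automorph is bounded away from $1$ in a way depending only on $f$. All of this is classical and can be quoted from the standard theory of binary quadratic forms; the rest is a clean book-keeping argument in the dichotomy above.
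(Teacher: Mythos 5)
Your reduction is exactly the paper's: drop the weights via $0\le F_T\le 1$ and the support condition, set $x_1=0$, and bound $a_0(T)$ by the number of integer solutions of the binary form $f(0,x_2,x_3)=t$ in the disc of radius $c_0T$. The paper then simply quotes Lemma 7.1 of \cite{LS10} for the resulting $O(\log T)$ bound, whose underlying argument is the same definite/anisotropic-Pell dichotomy you sketch, so your proposal is correct and essentially the same as the paper's proof.
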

	
	\begin{proof}
		The definition in \eqref{anT} gives 
		\begin{equation*}
		a_0(T)=\sum_{{\bf x}\in \mathscr{O} \atop x_1=0}F_T({\bf x}),
		\end{equation*}
		and the latter equals $S$, the weighted number of integral solutions to the equation 
		$f(0,x_2,x_3)=t$ in the region $\sqrt{x_2^2+x_3^2}\le c_0T$. If we write 
		$$
		f(0,x_2,x_3)=ax_2^2+bx_2x_3+cx_3^2 
		$$
		and $\delta=b^2-4ac,$ 
		then the argument in \cite[Lemma 7.1]{LS10} gives 
		$$
		S\ll \log T+\log(|a|+|b|+|\delta|)c_0, 
		$$ 
		where the implied constant is absolute. The lemma is proved. 
	\end{proof}
	
\begin{lemma}\label{antUpper}
Write 
		\begin{eqnarray}\label{def/X}
		X=\sum\limits_{n\geq 1} a_n(T).
		\end{eqnarray}
		Then, as $T\to \infty$, 
		 \begin{eqnarray}
		X\asymp T, 
		\end{eqnarray}
where $X\asymp T$ means $T\ll X\ll T$. 
\end{lemma}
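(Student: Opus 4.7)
The plan is to reduce the sum to the total orbital sum and subtract off the controlled $x_1=0$ slice. First I would write
$$X=\sum_{{\bf x}\in\mathscr{O}}F_T({\bf x})-\sum_{\substack{{\bf x}\in\mathscr{O}\\ x_1=0}}F_T({\bf x})=:\Sigma(T)-a_0(T),$$
and observe that by Lemma~\ref{a0tUpper} the subtracted term is $O(\log T)$, negligible once the asymptotic $\Sigma(T)\asymp T$ is established.

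The core task is therefore the two-sided bound for $\Sigma(T)$. Because $F_T$ is sandwiched between the indicator functions of the Euclidean balls of radii $T/c_0$ and $c_0 T$, both bounds reduce to the orbit-point count
$$\#\{{\bf x}\in\mathscr{O}:|{\bf x}|\leq R\}\asymp R\qquad(R\to\infty).$$
Since $f$ is anisotropic over $\Bbb Q$ but indefinite over $\Bbb R$, $V(\Bbb R)$ is a smooth hyperboloid on which $SO_f(\Bbb R)\cong SO(2,1)$ acts transitively, and the stabilizer $H$ of the base point ${\bf y}$ is a one-parameter (torus) subgroup. The orbit $\mathscr{O}={\bf y}\Gamma$ is a $\Gamma$-orbit on $SO_f(\Bbb R)/H$, and the $K$-invariance of $|\cdot|$ makes the Euclidean ball $\{|{\bf x}|\leq R\}$ a $K$-invariant region on this homogeneous space. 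I would invoke the equidistribution / volume asymptotics already set up in the harmonic-analytic framework of \cite{LS10}: the Haar measure decomposition on $SO_f(\Bbb R)/H$ translates the count into a hyperbolic ball count on $\Gamma\backslash\h$, whose radius grows like $\log R$ and whose area grows linearly in $R$, yielding $\Sigma(T)=cT+O(1)$ for a positive constant $c=c_{\mathscr{O}}$.

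The main obstacle — really the only non-routine step — is to check that the main-term computation from \cite{LS10}, which was deployed there for a different weighted sum, transplants verbatim to the smooth radial weight $F_T(|{\bf x}|)$ used here. All the ingredients (the orbit $\mathscr{O}$, the group $\Gamma$, the compact subgroup $K$, and the radial cut-off) are the same, so after matching conventions one obtains $\Sigma(T)\asymp T$; combined with the $O(\log T)$ bound on $a_0(T)$ from Lemma~\ref{a0tUpper}, this yields $X\asymp T$ as claimed.
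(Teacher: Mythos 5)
Your skeleton is sound and close to the paper's: both proofs subtract the $x_1=0$ slice via Lemma~\ref{a0tUpper} and draw the main term $\asymp T$ from the machinery of \cite{LS10}. The difference is in how the total orbital sum $\Sigma(T)=\sum_{{\bf x}\in\mathscr{O}}F_T({\bf x})$ is handled. You propose to re-derive $\Sigma(T)\asymp T$ from scratch by the harmonic-analytic volume/lattice-point count on $SO_f(\Bbb R)$, and you yourself flag the ``transplanting'' of the main-term computation as the one unverified step. The paper avoids any new archimedean analysis: it writes $\Sigma(T)=Y+a_0^{*}(T)$, where
$$
Y=\sum_{n\ge 1}\ \sum_{{\bf x}\in \mathscr{O},\, x_1x_2x_3=\pm n}F_T({\bf x}),
\qquad
a_0^{*}(T)=\sum_{{\bf x}\in \mathscr{O},\, x_1x_2x_3=0}F_T({\bf x}),
$$
and observes that $Y$ is exactly the weighted count studied in \cite{LS10} (so $Y\asymp T$ by (3.4), (3.60), (3.63) there, with the \emph{same} weight $F_T$, which was constructed in that paper), while $a_0^{*}(T)\ll\log(c_0T)$ by \cite[Lemma 7.1]{LS10}; hence $X=Y+a_0^{*}(T)-a_0(T)\asymp T$ with no need to match conventions or redo the counting. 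So your ``only non-routine step'' can be closed instantly by this identity rather than by repeating the spectral argument. Two minor cautions: your claim $\Sigma(T)=cT+O(1)$ is stronger than what the hyperbolic count gives (the error there is a power of $T$, not $O(1)$), though only $\asymp$ is needed; and the stabilizer of ${\bf y}$ need not be a compact torus --- depending on the sign of $t$ it is $SO(2)$ or $SO(1,1)$ --- which does not affect the conclusion but should not be asserted as stated.
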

	
	\begin{proof}
		We note that $X+a_{0}(T)$ counts the weighted number of solutions on $\mathscr{O}$, that is  
		$$
		\sum_{{\bf x}\in \mathscr{O}}F_T({\bf x}). 
		$$
		On the other hand, the above quantity also equals $Y+a_{0}^{*}(T)$ with 
		$$
		Y=\sum\limits_{n\ge 1}\sum\limits_{{\bf x}\in \mathscr{O} \atop x_1x_2x_3=\pm n}F_T({\bf x}),
		\quad
		a_0^{*}(T)=\sum_{{\bf x}\in \mathscr{O} \atop x_1x_2x_3=0}F_T({\bf x}). 
		$$
		It follows that 		
		$$
		X=Y+a_{0}^{*}(T)-a_{0}(T). 
		$$ 
		By \cite[(3.4),(3.60) and(3.63)]{LS10} we have $Y\asymp T$ as $T\to \infty$. 
		The upper bound $a_{0}^{*}(T)\ll_{f}\log (c_{0}T)$ is proved in \cite[Lemma 7.1]{LS10}. These together 
		with Lemma \ref{a0tUpper} give the desired estimate for $X$.  
	\end{proof}
	
	\subsection{An equi-distribution result} 
	Note that $D_f$ is splitting at $\infty$, and this yields 
	$$
	D_f\otimes \Bbb R \cong M_2(\Bbb R). 
	$$
	This realizes $\Gamma$ as a co-compact lattice in $SL_2(\Bbb R)$. 
	Let $\Gamma(q)$ be the principal congruence subgroup of $\Gamma$ of
	level $q\geq 1$, and $\lambda_1$ the first eigenvalue of the Laplacian
	on the hyperbolic surface $X_{\Gamma(q)}=\Gamma(q)\backslash \Bbb H$. Then 
	there is $0\leq \theta <1/4$ so that 
	\begin{eqnarray}
	\lambda_1(X_{\Gamma(q)})\geq \f{1}{4}-\theta^2.
	\end{eqnarray}
	By the Jacquet-Langlands correspondence \cite{JacLan} and the bound of 
	Kim-Sarnak \cite{KimSar}, one sees that $\theta=7/64$ is admissible,  
	while Selberg's eigenvalue conjecture \cite{Sel} implies that $\theta=0$. 
	
	Denote by $\mathscr{O}(\Bbb Z/d\Bbb Z)$ the orbit ${\bf y}\Gamma$ in $(\Bbb Z/d\Bbb Z)^3$ 
	stated in the previous subsection, and by
	$\mathscr{O}^0(\Bbb Z/d\Bbb Z)$ the subset of $\mathscr{O}(\Bbb Z/d\Bbb Z)$ for which
	$y_1 \equiv 0(\bmod\ d)$. Then we  have the following equi-distribution result. 
	
	\begin{lemma}\label{fO/dens}
		Let $f$ and $\mathscr{O}$ be as above, and $X$ as in \eqref{def/X}. 
		Then, for $1\leq d\leq T$,
		we have
		\begin{equation}\label{anTd}
		\sum_{n\equiv 0(\bmod d)\atop n\geq 1} a_n(T)
		=\dfrac{\omega(d)}{d}X
		+ O_\varepsilon (d^{1+\varepsilon}T^{1/2+\theta+\varepsilon}),
		\end{equation}
		where 
		\begin{equation}\label{local}
		\dfrac{\omega(p)}{p}=\f{|\mathscr{O}^0(\Bbb Z/p\Bbb Z)|}{|\mathscr{O}(\Bbb Z/p\Bbb Z)|}
		=\frac{1}{p}+O\left(\frac{1}{p^2}\right). 
		\end{equation}
	\end{lemma}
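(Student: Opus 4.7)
The plan is to adapt the orbital counting framework of \cite{LS10}, specializing the congruence condition from $x_1 x_2 x_3 \equiv 0(\bmod\ d)$ treated there to the simpler $x_1 \equiv 0(\bmod\ d)$ required here. First I would observe that
$$
\sum_{\stacksum{n \ge 1}{n \equiv 0(\bmod\ d)}} a_n(T)
= \sum_{\stacksum{{\bf x}\in \mathscr{O}}{x_1 \equiv 0(\bmod\ d)}} F_T({\bf x}) - a_0(T),
$$
and by Lemma \ref{a0tUpper} the subtracted $a_0(T)$ contributes only $O_f(\log T)$, which is absorbed into the advertised error. Thus it suffices to handle the weighted orbit sum on the right.

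Next I would decompose $\Gamma = \bigsqcup_{i} \gamma_i \Gamma(d)$ into cosets of the principal congruence subgroup $\Gamma(d)$, whose index grows polynomially in $d$. Since $\Gamma(d)$ acts trivially modulo $d$, the congruence $({\bf y}\gamma_i\gamma')_1 \equiv 0(\bmod\ d)$ depends only on $\gamma_i$, selecting precisely those cosets for which ${\bf y}\gamma_i$ lies in $\mathscr{O}^0(\Bbb Z/d\Bbb Z)$. For each relevant coset I would count
$$
\sum_{\gamma' \in \Gamma(d)} F_T({\bf y}\gamma_i\gamma').
$$
Via the splitting $D_f \otimes \Bbb R \cong M_2(\Bbb R)$ realising $\Gamma(d)$ as a co-compact lattice in $SL_2(\Bbb R)$, this is a smooth lattice point count on $\Gamma(d)\backslash SL_2(\Bbb R)$. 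A spectral expansion on $\Gamma(d)\backslash \Bbb H$ separates the constant (trivial) eigenfunction, which supplies the main term, from the non-trivial spectrum, whose contribution is bounded using the gap $\lambda_1 \ge 1/4 - \theta^2$. Via Jacquet--Langlands and the Kim--Sarnak bound $\theta = 7/64$, this yields an error of size $O(T^{1/2+\theta+\varepsilon})$ per coset; summing over the $\ll d^{1+\varepsilon}$ relevant cosets produces the claimed total error $O_\varepsilon(d^{1+\varepsilon} T^{1/2+\theta+\varepsilon})$.

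To assemble the main term, the proportion of relevant cosets is exactly $|\mathscr{O}^0(\Bbb Z/d\Bbb Z)|/|\mathscr{O}(\Bbb Z/d\Bbb Z)| = \omega(d)/d$, and multiplying by the full weighted count $X$ supplied by Lemma \ref{antUpper} gives the main term $(\omega(d)/d)X$. The local density claim $\omega(p)/p = 1/p + O(1/p^{2})$ for $p \notin B$ then follows from an elementary point count on $V$ modulo $p$: smoothness of the affine quadric $V$ gives $|V(\Bbb Z/p\Bbb Z)| = p^2 + O(p)$, while imposing $x_1 \equiv 0$ cuts out a binary conic with $p + O(1)$ points, so the ratio is $1/p + O(1/p^{2})$.

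The delicate step is obtaining the sharp error term with exponent $1/2+\theta$ in $T$ uniformly in $d$. This demands effective control of exceptional eigenvalues on the congruence covers $\Gamma(d)\backslash \Bbb H$, and it is precisely the Jacquet--Langlands transfer of the Kim--Sarnak bound from $GL_2(\Bbb A_\Bbb Q)$ to the quaternionic lattice that makes this feasible. The required uniformity in $d$ -- both for the number of exceptional eigenvalues and the index $[\Gamma : \Gamma(d)]$ -- is the main technical input, and is already carried out in the parallel setting of \cite[\S\S3--6]{LS10}; the present lemma is a direct adaptation with $x_1$ in place of $x_1 x_2 x_3$.
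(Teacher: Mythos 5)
Your overall route is the same one the paper takes: reduce to the weighted orbit count with the congruence $x_1\equiv 0(\bmod\ d)$, discard $a_0(T)$ via Lemma \ref{a0tUpper}, and then run the $\Gamma(d)$--spectral argument of \cite{LS10} (Jacquet--Langlands plus Kim--Sarnak) exactly as in the proof of \cite[Theorem 2.1]{LS10}, which is what the paper itself cites. However, two steps in your sketch are not right as written. First, the error bookkeeping: the number of cosets $\gamma_i\Gamma(d)$ with $({\bf y}\gamma_i)_1\equiv 0(\bmod\ d)$ is $\frac{|\mathscr{O}^0(\Z/d\Z)|}{|\mathscr{O}(\Z/d\Z)|}\,[\Gamma:\Gamma(d)]$, which is of size about $d\,|\mathscr{O}^0(\Z/d\Z)|$, generically $d^{2+o(1)}$, not $\ll d^{1+\varepsilon}$. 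With an error of $T^{1/2+\theta+\varepsilon}$ per coset you would only reach $O(d^{2+\varepsilon}T^{1/2+\theta+\varepsilon})$, which halves the level of distribution $\tau$ and would ruin the exponents $6$ and $16$ in the theorems. What is $\ll d^{1+\varepsilon}$ is the number of relevant \emph{residue classes}, i.e.\ $|\mathscr{O}^0(\Z/d\Z)|$; one must group the roughly $d$ cosets lying over a fixed class and prove an error of $T^{1/2+\theta+\varepsilon}$ \emph{per class}, uniformly in $d$ --- that is precisely the content of \cite[Theorem 2.1]{LS10}. Moreover the bound $|\mathscr{O}^0(\Z/d\Z)|\ll d^{1+\varepsilon}$ is not free: the paper gets it by noting $\mathscr{O}^0(\Z/d\Z)\subset\mathscr{O}_3^0(\Z/d\Z)$ (the set where $y_1y_2y_3\equiv 0$) and quoting \cite[(4.14)]{LS10}, whereas your sketch simply asserts the $d^{1+\varepsilon}$ count.

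Second, the local density \eqref{local} is defined as the ratio $|\mathscr{O}^0(\Z/p\Z)|/|\mathscr{O}(\Z/p\Z)|$ on the fixed orbit, not on $V$; since $V$ fails strong approximation (which is exactly why the sieve is run orbit by orbit), your computation of $|V^0(\Z/p\Z)|/|V(\Z/p\Z)|$ does not by itself yield \eqref{local}. One still has to pass from the counts on $V$ modulo $p$ --- which the paper takes from Cassels, $|V_t(\Z/p\Z)|=p^2+\big(\tfrac{-dt}{p}\big)p$ and $|V_t^0(\Z/p\Z)|=p\pm 1$ for $p\nmid d(f)t$ --- to the counts on $\mathscr{O}(\Z/p\Z)$, and this is done by the argument of \cite[(4.8)--(4.13)]{LS10}, which the paper invokes explicitly and your proposal omits. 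Both repairs are available in \cite{LS10}; once they are inserted your argument coincides with the paper's proof.
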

	
	\begin{proof}
		Let $\mathscr{O}_{3}^0(\Bbb Z/d\Bbb Z)$ be the subset of $\mathscr{O}(\Bbb Z/d\Bbb Z)$ for which
		$y_1y_2y_3 \equiv 0(\bmod\ d)$. We have that 
		$\mathscr{O}^{0}(\Bbb Z/d\Bbb Z)\subset \mathscr{O}_{3}^0(\Bbb Z/d\Bbb Z)$ 
		since $y_1 \equiv 0(\bmod\ d)$ implies $y_1y_2y_3 \equiv 0(\bmod\ d)$.  
		Thus, for any square-free $d$, 
		$$
		|\mathscr{O}^{0}(\Bbb Z/d\Bbb Z)|\ll d^{1+\varepsilon}
		$$
		by \cite[(4.14)]{LS10}.  
		
		To analyze the local density \eqref{local}, we note that $d(f)t$ is square-free, and hence 
		for $p\nmid d(f)t$, Cassels\cite[Exercise 13 on page 31]{Cas} gives
		$$
		|V_t(\Bbb Z/p\Bbb Z)|=p^2+\bigg(\dfrac{-dt}{p}\bigg)p
		$$
		and
		$$
		|V_t^0(\Bbb Z/p\Bbb Z)|=|\{(x_2,x_3)\in\Bbb F_p^2: f(0,x_2,x_3)\equiv 0(\bmod\ p)\}|=p\pm 1. 
		$$
		It follows that 
		\eqref{local} holds by the same argument as in \cite[(4.8)-(4.13)]{LS10}.
		With Lemmas \ref{a0tUpper} and \ref{antUpper}, 
		the proof of \eqref{anTd} is exactly the same as that of \cite[Theorem 2.1]{LS10}.
	\end{proof}
	
	As with $V(\Bbb Z/p\Bbb Z)$ and $V^0(\Bbb Z/p\Bbb Z)$, we say that $p$
	is {\it bad} or {\it ramified} for $\mathscr{O}$ if 
	$\mathscr{O}(\Bbb Z/p\Bbb Z)=\mathscr{O}^0(\Bbb Z/p\Bbb Z)$ 
	where $\mathscr{O}^0(\Bbb Z/p\Bbb Z)$ is the subset of $\mathscr{O}(\Bbb Z/p\Bbb Z)$ for which
	$y_1 \equiv 0(\bmod\ p)$.
	It is proved in \cite[\S4]{LS10} that $\mathscr{O}_3 (\Bbb Z/p\Bbb Z)\not=\mathscr{O}_{3}^0(\Bbb Z/p\Bbb Z)$ when $p>7$, 
	and hence $\mathscr{O}(\Bbb Z/p\Bbb Z)\not=\mathscr{O}^0(\Bbb Z/p\Bbb Z)$ when $p>7$. The only possibility
	for a bad $p$ for $\mathscr{O}$ is a $p$ that lies in $B$ defined in \eqref{def/B}.  
	
	\section{Weighted sieves}
	
	In this section we introduce some results of weighted sieves, 
	which will be used to prove the main results of this paper. 
	
	\subsection{The nonlinear case $\kappa> 1$}
	Let $\mathscr{A}$ be a finite sequence of real numbers $a_n\ge 0$, and $B$ a fixed finite set of primes. We are interested in a reasonable lower-bound estimate for
	\begin{equation}\label{weight}
	\sum\limits_{n\in P_r(B)}a_n,
	\end{equation}
	where $P_r(B)$ is the set of positive integers with at most $r$ prime divisors outside $B$. To estimate \eqref{weight}, we need to know how $\mathscr{A}$ is distributed to each of the arithmetic progression $0(\bmod\ d)$, where $d$ is square-free and $(d, B)=1$. To this end, let $d$ be a square-free number, and write
	$$
	\mathscr{A}_d=\left\{a_n \in \mathscr{A}: n \equiv 0(\bmod\ d)\right\} .
	$$
	Suppose there exists an approximation $X$ to 
	$|\mathscr{A}|:=\sum a_n$ 
	and a non-negative multiplicative function $\omega(d)$ satisfying
	$$
	\left\{
	\begin{array}{ll}
	\omega(1)=1, & \\ 
	0 \leq \omega(p)<p, & \text { if } p \notin B, \\ 
	\omega(p)=0, & \text { if } p \in B, 
	\end{array}\right. 
	$$
	and for some fixed (independent of $z, z_1$) constants $\kappa>1$ and $A \geq 2$,
	$$
	\prod_{2 \leq p<z}\left(1-\frac{\omega(p)}{p}\right)^{-1} \leq\left(\frac{\log z}{\log z_1}\right)^\kappa\left(1+\frac{A}{\log z_1}\right), 
	\quad {\rm for} \ 2 \leq z_1<z. 
	$$
	Write
	$$
	R_d=\left|\mathscr{A}_d\right|-\frac{\omega(d)}{d} X
	$$
	The quantity $\frac{\omega(d)}{d} X$ is considered as an approximation to $\left|\mathscr{A}_d\right|$, and therefore we suppose that the errors $R_d$ are small on average, in the sense that for some constant $\tau$ with $0<\tau<1, A_1 \geq 1$, and $A_2 \geq 2$,
	\begin{equation}\label{def/tau}
	\sum\limits_{d<X^\tau \log^{-A_1}X \atop (d,B)=1}\mu^{2}(d)4^{\nu(d)}|R_d|\le A_2\dfrac{X}{\log^{\kappa+1}X}
	\end{equation}
	where $\nu(d)$ denotes the number of prime factors of $d$. 
	This $\tau$ is called {\it level of equi-distributions}, and is a crucial constant in sieve methods. 
	Finally, we introduce a constant $\mu$ such that
	$$
	\max _{a_n \in \mathscr{A}} n \leq X^{\tau \mu}
	$$
	The following two lemmas are essentially in Diamond and Halberstam \cite[Theorems 0 and 1]{DiaHal}.
	
	\begin{lemma}
		Let $\kappa>1$ be given, and let $\sigma_\kappa(u)$ be the continuous solution of the differential-difference problem
		$$
		\left\{
		\begin{array}{ll}
		u^{-\kappa} \sigma(u)=A_\kappa^{-1}, \quad & 0<u \leq 2, \ A_\kappa=(2 e^\gamma)^\kappa \Gamma(\kappa+1),
		\\
		(u^{-\kappa} \sigma(u))^{\prime}=-\kappa u^{-\kappa-1} \sigma(u-2), \quad & 2<u.
		\end{array}
		\right.
		$$
		There exist two numbers $\alpha_{\kappa}$ and $\beta_{\kappa}$ satisfying
		$$
		\alpha_{\kappa} \geq \beta_{\kappa} \geq 2
		$$
		such that the simultaneous differential-difference system
		$$
		\left\{\begin{array}{ll}
		F(u)=1 / \sigma_{\kappa}(u), & 0<u \leq \alpha_{\kappa}, \\
		f(u)=0, & 0<u \leq \beta_\kappa, \\
		(u^\kappa F(u))^{\prime}=\kappa u^{\kappa-1} f(u-1), & u>\alpha_{\kappa}, \\
		(u^\kappa f(u))^{\prime}=\kappa u^{\kappa-1} F(u-1), & u>\beta_\kappa,
		\end{array}\right.
		$$
		has continuous solutions $F_\kappa(u)$ and $f_\kappa(u)$ with the properties that
		$$
		F_\kappa(u)=1+O(e^{-u}), \quad f_\kappa(u)=1+O(e^{-u}),
		$$
		and that $F_\kappa(u)$ and $f_\kappa(u)$, respectively, decreases and increases monotonically towards $1$ as $u \rightarrow \infty$. 
	\end{lemma}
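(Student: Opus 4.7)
The plan is to build $\sigma_\kappa$ first, then the pair $(F_\kappa, f_\kappa)$, by iterating Volterra-type integral equations derived from the differential-difference relations, and finally to read off the decay to $1$ from a Laplace-transform analysis. For $\sigma_\kappa$, the initial data $\sigma_\kappa(u) = u^\kappa/A_\kappa$ on $(0,2]$ is explicit. On $(2k, 2k+2]$ for $k\ge 1$, integrating $(u^{-\kappa}\sigma(u))' = -\kappa u^{-\kappa-1}\sigma(u-2)$ from $2k$ to $u$ expresses $\sigma_\kappa(u)$ as $u^\kappa$ times an integral involving $\sigma_\kappa(v-2)$ for $v\in[2k,u]$, so the previous piece determines the next. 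Induction on $k$ produces a positive continuous function on $(0,\infty)$ that is smooth off the break-points $u=2k$, with continuity at each break-point automatic from the integrated form.

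For the coupled system I would take $\alpha_\kappa \ge \beta_\kappa \ge 2$ to be the unique pair for which the adjoint functional attached to the DDE system vanishes; these values are tabulated in Diamond--Halberstam as explicit functions of $\kappa$. With $F_\kappa(u) = 1/\sigma_\kappa(u)$ on $(0,\alpha_\kappa]$ and $f_\kappa(u) = 0$ on $(0,\beta_\kappa]$, the identities $(u^\kappa F(u))' = \kappa u^{\kappa-1}f(u-1)$ and $(u^\kappa f(u))' = \kappa u^{\kappa-1}F(u-1)$ can be integrated forward one unit at a time: on $(\alpha_\kappa+k-1, \alpha_\kappa+k]$ the right-hand side only involves already-known values of $f_\kappa$, and integration yields $F_\kappa$ on that interval, and symmetrically for $f_\kappa$ on $(\beta_\kappa+k-1, \beta_\kappa+k]$. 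Continuity at the break-points is built into the integral form, and $C^1$-smoothness off them is inherited from the argument of the integrand.

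The main obstacle is establishing $F_\kappa(u)-1, f_\kappa(u)-1 = O(e^{-u})$ together with the monotone approach to $1$. I would pass to the Laplace transforms of $F_\kappa-1$ and $f_\kappa-1$; the DDEs translate to a linear system in the transform variable $s$ whose inhomogeneous terms are entire functions with exponential factors $e^{-\alpha_\kappa s}$ and $e^{-\beta_\kappa s}$, and the very defining property of $\alpha_\kappa, \beta_\kappa$ is precisely what cancels the would-be pole of the transforms at $s=0$. Shifting the Mellin contour past $s=0$ into $\Re s \le -1+\varepsilon$ then produces the claimed exponential decay on the physical side. Finally, the monotonicity $F_\kappa\downarrow 1$ and $f_\kappa\uparrow 1$ is proved by induction on the intervals $(\alpha_\kappa+k-1, \alpha_\kappa+k]$: once $F_\kappa - 1\ge 0$ and $f_\kappa - 1\le 0$ have been established on a given interval, the DDEs propagate the signs, and monotonicity, to the next interval, with the amplitude forced to shrink by the already-proven $O(e^{-u})$ bound. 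This is entirely classical material, and the paper rightly cites Diamond--Halberstam for the construction and the tabulation of $\alpha_\kappa,\beta_\kappa$.
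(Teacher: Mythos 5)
The paper offers no proof of this lemma at all: it is imported with the single sentence that the two lemmas are ``essentially in Diamond and Halberstam [Theorems 0 and 1]'', so the only argument in the paper is the citation. Your sketch follows the standard Diamond--Halberstam--Richert route (stepwise forward integration for $\sigma_\kappa$, then for $F_\kappa,f_\kappa$, adjoint/Laplace-transform analysis for the approach to $1$), so in spirit you are leaning on the same source as the authors; your construction of $\sigma_\kappa$ and the interval-by-interval integration of the coupled system are fine as far as they go.

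Judged as a proof, however, there are two genuine gaps. First, all the real content of the lemma sits in the existence of the pair $\alpha_\kappa\ge\beta_\kappa\ge 2$ with the stated limiting behaviour, and you simply posit ``the unique pair for which the adjoint functional vanishes'' and point to tables. Forward integration produces continuous solutions for \emph{every} choice of $\alpha,\beta\ge 2$; what singles out $\alpha_\kappa,\beta_\kappa$ is a pair of transcendental conditions (built from the adjoint equation) forcing $F$ and $f$ to tend to the common limit $1$, and proving that this system admits a solution with $\alpha_\kappa\ge\beta_\kappa\ge 2$ is precisely the main theorem of Diamond--Halberstam--Richert; it cannot be treated as bookkeeping, and without it the contour-shift argument has nothing to cancel the pole at $s=0$. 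Second, the monotonicity induction does not close as stated. From $(u^\kappa F(u))'=\kappa u^{\kappa-1}f(u-1)$ one gets $F'(u)=\kappa u^{-1}\bigl(f(u-1)-F(u)\bigr)$, so knowing $f\le 1$ on earlier intervals only shows $F$ decreases \emph{while it stays above} $1$; at a first crossing point $u_1$ with $F(u_1)=1$ the derivative equals $\kappa u_1^{-1}\bigl(f(u_1-1)-1\bigr)\le 0$, which does not prevent $F$ from dipping below $1$ (and symmetrically for $f$ rising above $1$). Excluding such crossings, i.e. $f\le 1\le F$ globally, again depends on the specific choice of $\alpha_\kappa,\beta_\kappa$ and the positivity representations in DHR, not on interval-by-interval sign propagation; note also that you invoke the $O(e^{-u})$ bound inside this induction although it is logically downstream of the same facts. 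Given that the paper itself only cites Diamond--Halberstam, the defensible options are to do the same or to reproduce the DHR argument in full; the intermediate sketch as written would not stand on its own.
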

	
	\begin{lemma}\label{Weight}
		Let $\mathscr{A}$ and $B$ be as above. Then, for any two real numbers $u$ and $v$ satisfying
		$$
		\frac{1}{\tau}<u \leq v, \quad \beta_\kappa<\tau v,
		$$
		we have
		$$
		\sum_{n \in P_r(B)} a_n \gg X \prod_{p<X^{1 / v}}\bigg(1-\frac{\omega(p)}{p}\bigg)
		$$
		provided only that
		$$
		r>\tau \mu u-1+\frac{\kappa}{f_\kappa(\tau v)} \int_1^{v / u} F_\kappa(\tau v-s)\bigg(1-\frac{u}{v} s\bigg) \frac{d s}{s} .
		$$
	\end{lemma}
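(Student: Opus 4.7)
The plan is to derive Lemma \ref{Weight} by combining a $\kappa$-dimensional two-sided sieve of Diamond--Halberstam type with Richert's weighted-sieve construction, as developed in \cite{DiaHal, DiaHal08} and \cite[Chap.~10]{HR74}. First I would set $y := X^{1/v}$ and $z := X^{1/u}$, which lie in the sieving range thanks to $1/\tau < u \le v$ and $\beta_\kappa < \tau v$. Following Richert, I would introduce the weight
\begin{equation*}
W_n := 1 - \frac{1}{\log z}\sum_{\substack{p \mid n,\ p \notin B \\ y \le p < z}} \log \frac{z}{p}.
\end{equation*}
The key property, to be checked by elementary bookkeeping, is that whenever $n \in \mathscr{A}$ is coprime to $\prod_{p\in B} p$ and to $P(y)$, has size $\le X^{\tau\mu}$, and has more than $r$ prime factors outside $B$, then $W_n \le 0$. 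The constant $\tau\mu u - 1$ in the lower bound on $r$ arises from the crude count that any such $n$ has at most $\lfloor \tau\mu u\rfloor$ prime factors in $[z, X^{\tau\mu}]$, since each such prime is $\ge z = X^{1/u}$, which forces the remaining factors to lie in $[y, z)$ where the weight deduction is active.

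Once this is in place, it suffices to show positivity of the weighted sum, which after switching orders of summation becomes
\begin{equation*}
\sum_{\substack{n \in \mathscr{A} \\ (n, P(y)) = 1}} W_n \;=\; S(\mathscr{A}; y) \;-\; \frac{1}{\log z}\sum_{y \le p < z}\log\frac{z}{p}\, S(\mathscr{A}_p; y).
\end{equation*}
I would estimate $S(\mathscr{A}; y)$ via the lower-bound $\kappa$-dimensional sieve, producing the main term $f_\kappa(\tau v)\, X\, V(y)$, where $V(y) := \prod_{p < y}(1 - \omega(p)/p)$; and each $S(\mathscr{A}_p; y)$ via the matching upper-bound sieve, producing $F_\kappa(\tau v - s_p)\,(\omega(p)/p)\, X\, V(y)\,(1+o(1))$ with $s_p := \log p / \log y$. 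Both applications are licensed by the level-of-distribution hypothesis \eqref{def/tau}, since $y^{v} = X$ forces the combined moduli in the sieve expansion to stay below $X^\tau \log^{-A_1} X$, so that the cumulative remainder is absorbed into $O(X/\log^2 X)$.

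The final step is to replace the sum over $p$ by its integral analogue. Using the Mertens-type asymptotic $\sum_{p < w} \omega(p)\log p / p = \kappa \log w + O(1)$ implicit in the dimension hypothesis, partial summation converts the finite sum into $\kappa \int_1^{v/u} F_\kappa(\tau v - s)(1 - us/v)\, ds/s$, and one obtains
\begin{equation*}
\sum_{n \in P_r(B)} a_n \;\ge\; X\, V(y)\left[\, f_\kappa(\tau v) \;-\; \kappa \int_1^{v/u} F_\kappa(\tau v - s)\Bigl(1 - \frac{us}{v}\Bigr)\frac{ds}{s}\,\right] \;+\; o(X\, V(y)),
\end{equation*}
which is bounded below by a positive multiple of $X\,V(y)$ precisely under the stated hypothesis on $r$. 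The main obstacle, and the step requiring the most care, is the very first one: verifying that $W_n \le 0$ for every $n$ with more than $r$ prime factors outside $B$, tracking contributions separately in the three ranges $[2, y)$, $[y, z)$, $[z, X^{\tau\mu}]$. This is exactly the combinatorial input that forces the shape of the bound on $r$ and that distinguishes the nonlinear case $\kappa > 1$ from the linear Jurkat--Richert setting.
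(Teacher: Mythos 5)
You should first note that the paper does not prove Lemma \ref{Weight} at all: it is quoted from Diamond--Halberstam \cite[Theorems 0 and 1]{DiaHal} (see also \cite[Theorem 11.1]{DiaHal08}), so your attempt is really a reconstruction of the standard weighted-sieve proof in those references. Your overall architecture is the right one (Richert logarithmic weights, lower-bound sieve for $S(\mathscr{A};y)$, upper-bound sieve for $S(\mathscr{A}_p;y)$ with level $X^\tau$ so that the parameters $\tau v$ and $\tau v-s_p$ appear, partial summation to produce $\kappa\int_1^{v/u}F_\kappa(\tau v-s)(1-\tfrac{u}{v}s)\tfrac{ds}{s}$), but there is a genuine gap: you have dropped the tunable normalization $\lambda$ from the weight. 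The correct weight is $1-\frac{1}{\lambda}\sum_{p\mid n,\,y\le p<z}\bigl(1-\frac{\log p}{\log z}\bigr)$; your choice amounts to fixing $\lambda=1$, and with that choice neither half of your argument delivers the stated threshold. On the combinatorial side, your ``key property'' is false in general: for $(n,P(y))=1$, $n\le X^{\tau\mu}$, one only gets $W_n\le 1+\tau\mu u-\nu_B(n)$, so ``more than $r$ prime factors $\Rightarrow W_n\le 0$'' needs $r\ge \tau\mu u$, whereas the hypothesis only gives $r>\tau\mu u-1+E$ with $E:=\frac{\kappa}{f_\kappa(\tau v)}\int_1^{v/u}F_\kappa(\tau v-s)(1-\tfrac{u}{v}s)\tfrac{ds}{s}$, which may be less than $1$ (in the paper's own application in Theorem \ref{Thm1} one has $E\approx 0.96$, and your version would force $r=7$ instead of $6$). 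On the analytic side, your final bracket $f_\kappa(\tau v)-\kappa\int_1^{v/u}F_\kappa(\tau v-s)(1-\tfrac{u}{v}s)\tfrac{ds}{s}$ does not involve $r$ at all, so it cannot be ``positive precisely under the stated hypothesis on $r$''; it is positive iff $E<1$, and in the $\kappa=2$ application of Theorem \ref{Thm2} one has $E>1$, so with $\lambda=1$ the weighted sum need not be positive and the argument collapses exactly where the paper needs it.

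The repair is the standard one: keep $\lambda$ free. Positivity of the weight forces $\sum_{p\mid n,\,y\le p<z}(1-\tfrac{\log p}{\log z})<\lambda$, hence $\nu_B(n)<\lambda+\tau\mu u$, so all $n$ with positive weight lie in $P_r(B)$ once $r>\lambda+\tau\mu u-1$; the sieve evaluation gives a main term proportional to $f_\kappa(\tau v)-\frac{\kappa}{\lambda}\int_1^{v/u}F_\kappa(\tau v-s)(1-\tfrac{u}{v}s)\tfrac{ds}{s}$, positive iff $\lambda>E$; letting $\lambda\downarrow E$ (for $r$ fixed, any $\lambda$ with $E<\lambda\le r+1-\tau\mu u$ works) yields exactly the condition $r>\tau\mu u-1+E$ of the lemma. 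You should also say a word about $n$ divisible by $p^2$ for some $p\ge y$ (discarded using \eqref{def/tau} and the trivial bound), and note that the remainder control uses the moduli $dp<X^\tau\log^{-A_1}X$ built into the two sieve applications, not merely ``$y^v=X$''. With these corrections your outline matches the Diamond--Halberstam/Halberstam--Richert proof that the paper cites.
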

	
	We remark that Lemma \ref{Weight} also holds in the case 
	$\kappa=1$, as is pointed out in Diamond and Halberstam \cite[Theorem 11.1]{DiaHal08}. 	
	
	\subsection{The linear case $\kappa=1$} 
	We are going to need the following two lemmas for the 
	linear sieve; for proof, see Halberstam, Heath-Brown,  and Richert 
	\cite[(3.11) and (3.12)]{HHR81}. 
	
	\begin{lemma}\label{F=/}
		For the function $F(s)$ in the linear sieve, we have
		$$
		F(s)
		=\left\{
		\begin{aligned}
		&F_1(s), \quad 1\le s \leq 3, \\
		&F_2(s), \quad 3 \leq s \leq 5, \\
		&F_3(s), \quad 5 \leq s \leq 7, 
		\end{aligned}
		\right. 
		$$
		with 
		$$
		F_1(s)=\frac{2 e^\gamma}{s}, \quad 
		F_2(s)=\frac{2 e^\gamma}{s}\bigg(1+\int_2^{s-1} \frac{\log (t-1)}{t} d t\bigg), \quad 
		$$
		$$
		F_3(s)=\frac{2 e^\gamma}{s}\bigg(1+\int_2^{s-1} \frac{\log (t-1)}{t} d t 
		+\int_2^{s-3} \frac{\log (t-1)}{t} d t \int_{t+2}^{s-1} \frac{1}{u} \log \frac{u-1}{t+1} d u\bigg), 
		$$
		where $\gamma=0.577 \ldots$ is the Euler constant. 
	\end{lemma}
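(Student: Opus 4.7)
The plan is to derive the three explicit formulas directly from the linear-sieve differential-difference system (the $\kappa=1$ specialization of the preceding lemma), working outward from the initial data in steps of length one. In the linear case one has $F(u)=2e^\gamma/u$ for $0<u\le 2$, $f(u)=0$ for $0<u\le 2$, together with $(uF(u))'=f(u-1)$ and $(uf(u))'=F(u-1)$ in the appropriate ranges. I would proceed by alternately extending $F$ and $f$ by one unit, integrating the differential-difference relations and patching by continuity.

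First, to justify $F_1(s)=2e^\gamma/s$ on $[1,3]$: on $[1,2]$ this is the defining formula, and on $(2,3]$ I use $(uF(u))'=f(u-1)$ with $u-1\in(1,2]$, where $f\equiv 0$. Hence $uF(u)$ is constant on $[2,3]$, and matching at $u=2$ gives $uF(u)=2e^\gamma$. Next, to prepare for the second range, I compute $f$ on $[2,4]$ by integrating $(uf(u))'=F(u-1)$ against $F(u-1)=2e^\gamma/(u-1)$, with $f(2)=0$, to obtain $uf(u)=2e^\gamma\log(u-1)$. Then for $s\in[3,5]$ I integrate $(sF(s))'=f(s-1)$ using this expression, starting from $3F(3)=2e^\gamma$:
\[
sF(s)=2e^\gamma+2e^\gamma\int_3^s\frac{\log(t-2)}{t-1}\,dt.
\]
The substitution $t\mapsto t+1$ converts the integral into $\int_2^{s-1}\frac{\log(t-1)}{t}\,dt$, yielding $F_2$.

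For $F_3$ on $[5,7]$ I repeat the process once more: first compute $f$ on $[4,6]$ by integrating $(uf(u))'=F_2(u-1)$ from $u=4$, using the value $4f(4)=2e^\gamma\log 3$ supplied by Step~2; then integrate $(sF(s))'=f(s-1)$ on $[5,7]$ starting from $5F_2(5)$. After the substitution $t\mapsto t+1$ one is left with a double integral which must be manipulated, via an interchange of order of integration, into the stated iterated form with outer variable $t\in[2,s-3]$ and inner variable $u\in[t+2,s-1]$. The main bookkeeping obstacle is precisely this last step: keeping track of the constants produced by the lower limits at each integration and verifying that the Fubini reordering of the resulting triple integral reproduces the kernel $\tfrac{1}{u}\log\tfrac{u-1}{t+1}$ stated in $F_3$. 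Once this reduction is carried out carefully, the three formulas match those recorded in \cite{HHR81}, completing the proof.
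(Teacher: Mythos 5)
Your derivation is correct, but it is not what the paper does: the paper offers no proof of this lemma at all, simply quoting the formulas from Halberstam--Heath-Brown--Richert \cite[(3.11), (3.12)]{HHR81}, whereas you supply the self-contained iteration that underlies that reference, namely starting from $F(u)=2e^\gamma/u$ on $(0,\alpha_1]$, $f(u)=0$ on $(0,\beta_1]$ with $\alpha_1=\beta_1=2$ and alternately integrating $(uF(u))'=f(u-1)$, $(uf(u))'=F(u-1)$ with continuity at the junctions. Your steps check out: $uF(u)\equiv 2e^\gamma$ on $[2,3]$, then $uf(u)=2e^\gamma\log(u-1)$ on $[2,4]$, then the shift $t\mapsto t+1$ gives $F_2$ on $[3,5]$, and the same procedure gives $f_2$ on $[4,6]$ and then $F$ on $[5,7]$. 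The only point you leave as ``bookkeeping'' deserves one more sentence: after the shift, the new term in $sF(s)$ is the triple integral $\int_4^{s-1}\frac{dt}{t}\int_3^{t-1}\frac{dv}{v}\int_2^{v-1}\frac{\log(w-1)}{w}\,dw$, and interchanging the order of integration (innermost variable outermost) yields $\int_2^{s-3}\frac{\log(w-1)}{w}\,dw\int_{w+1}^{s-2}\frac{1}{v}\log\frac{s-1}{v+1}\,dv$, which is not literally the kernel stated in the lemma; to match $\int_{t+2}^{s-1}\frac{1}{u}\log\frac{u-1}{t+1}\,du$ one performs one further interchange, writing $\log\frac{u-1}{t+1}=\int_{t+1}^{u-1}\frac{dx}{x}$ and applying Fubini again (or equivalently runs the interchange in the opposite direction). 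With that identity made explicit the two expressions coincide and your proof is complete; the trade-off is that your route is longer than the paper's citation but makes the lemma verifiable without consulting \cite{HHR81}.
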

	
	\begin{lemma}\label{f=/}
		For the function $f(s)$ in the linear sieve, we have
		$$ 
		f(s)
		=\left\{
		\begin{aligned}
		&f_1(s), \quad 2 \leq s \leq 4, \\
		&f_2(s), \quad 4 \leq s \leq 6, \\
		&f_3(s), \quad 6 \leq s \leq 8,
		\end{aligned}
		\right. 
		$$
		with 
		$$
		f_1(s)=\frac{2 e^\gamma}{s} \log (s-1), \quad 
		f_2(s)=\frac{2 e^\gamma}{s}\bigg(\log (s-1)+\int_3^{s-1} \frac{d t}{t} \int_2^{t-1} \frac{\log (u-1)}{u} d u\bigg), 
		$$
		\begin{eqnarray*}		
			\begin{aligned}
				f_3(s)=
				&\frac{2 e^\gamma}{s}\left(\log (s-1)+\int_3^{s-1} \frac{d t}{t} \int_2^{t-1} \frac{\log (u-1)}{u} d u\right. \\
				&\left.+\int_2^{s-4} \frac{\log (t-1)}{t} d t \int_{t+2}^{s-2} \frac{1}{u} \log \frac{u-1}{t+1} \log \frac{s}{u+2} d u\right), 
			\end{aligned}
		\end{eqnarray*}		
		where $\gamma=0.577 \ldots$ is the Euler constant. 
	\end{lemma}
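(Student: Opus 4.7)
The plan is to derive these explicit formulas by solving the linear-sieve differential-difference equation
$$
(sf(s))' = F(s-1), \quad s > 2,
$$
with boundary condition $f(s) = 0$ on $(0, 2]$, iteratively on successive intervals of length $2$. Lemma \ref{F=/} already supplies the piecewise formula for $F$, so what remains is to integrate term by term and match values at the junction points $s = 4$ and $s = 6$.

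On $[2,4]$, we have $s-1 \in [1,3]$, so $F(s-1) = 2e^\gamma/(s-1)$. Integrating the relation above from $2$ to $s$, using $(sf(s))\bigl|_{s=2} = 0$, gives
$$
sf(s) = 2e^\gamma \int_2^s \frac{dt}{t-1} = 2e^\gamma \log(s-1),
$$
which is precisely $f_1(s)$. On $[4,6]$, one substitutes the expression $F_2(s-1)$ from Lemma \ref{F=/}, integrates from $4$ to $s$, and adds the boundary contribution $4 f_1(4) = 2e^\gamma \log 3$ to ensure continuity at $s = 4$. After a change of order of integration in the double integral produced by the $\int_2^{s-2}\log(t-1)/t\,dt$ term, one obtains $f_2(s)$. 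On $[6,8]$, exactly the same procedure is repeated with $F_3(s-1)$ in place of $F_2(s-1)$ and with the continuity correction at $s = 6$; the additional triple-integral term in $f_3$ arises from integrating the extra nested piece present in $F_3$.

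The main obstacle — and it is purely a bookkeeping one — is to verify that the various iterated integrals reassemble in the closed form displayed, and that the junction values $f_1(4) = f_2(4)$ and $f_2(6) = f_3(6)$ come out correctly after changing the order of integration. Since this computation is carried out in full in Halberstam, Heath-Brown, and Richert \cite[(3.11)--(3.12)]{HHR81}, we would simply cite the result rather than reproduce the manipulations. No new analytic input is required beyond Lemma \ref{F=/} and the defining system of the linear sieve.
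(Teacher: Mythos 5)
Your proposal is correct and is essentially the paper's own approach: the paper gives no derivation either, but simply cites \cite[(3.11) and (3.12)]{HHR81}, and your sketch of iterating $(sf(s))'=F(s-1)$ against the expressions of Lemma \ref{F=/}, with continuity matching at $s=4$ and $s=6$, is exactly the standard computation behind that citation. Two small cautions should you ever carry it out: for $f_2$ only the substitution $t\mapsto t-1$ is needed (no interchange of the order of integration), and for $f_3$ the interchange in the resulting triple integral naturally produces the inner factor $\log\frac{s-1}{u+1}$ rather than the factor $\log\frac{s}{u+2}$ displayed in the statement (the two are not identical), so the precise closed form at the last step should be checked against the cited source rather than assumed to "reassemble" automatically.
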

	
	We note that 
	$$
	F(7)\leq 1.0000050, \quad f(8)\geq 0.9999648, 
	$$
	and these values are very close to $1$. This explains why in the above two lemmas we confine our iteration procedure 
	for $F$ to $s\leq 7$, and for $f$ to $s\leq 8$. Computations involving $F$ and $f$ for larger $s$ needs extra efforts, 
	but will not improve the final results. 
	
	\begin{proposition}\label{Prop35}
		Let $\mathscr{A}$ and $B$ be as above. For 
		\begin{eqnarray}\label{def/ab}		
		1\le a< 3< a+5< b\le 8, 
		\end{eqnarray}		
		we have
		\begin{equation}\label{nPrBabgg}
		\sum_{n \in P_r(B)} a_n \gg X \prod_{p<X^{\tau / b}}\bigg(1-\frac{\omega(p)}{p}\bigg)
		\end{equation}
		with 
		\begin{equation}\label{35r>}
		r>\dfrac{b}{(b-a)\tau}-1+\dfrac{2e^{\gamma}}{f(b)}(I_1+I_2+I_3),
		\end{equation}
		where
		$$
		\begin{aligned}
		&I_1=\dfrac{1}{b}\log\dfrac{(b-1)(b-a)}{a}-\dfrac{1}{b-a}\log\dfrac{b-1}{a},\\
		&I_2=\int_{3}^{b-1}\dfrac{1}{t}\bigg(\dfrac{1}{b-t}-\dfrac{1}{b-a}\bigg)dt\int_{2}^{t-1}\dfrac{\log(u-1)}{u}du,\\
		&I_3=\int_{5}^{b-1}\dfrac{1}{t}\bigg(\dfrac{1}{b-t}-\dfrac{1}{b-a}\bigg)dt\int_{2}^{t-3}\dfrac{\log(u-1)}{u}du\int_{u+2}^{t-1}\dfrac{1}{v}\log\dfrac{v-1}{u+1}dv.
		\end{aligned}
		$$
	\end{proposition}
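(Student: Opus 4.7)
The plan is to apply a Richert-type weighted sieve in the linear case ($\kappa = 1$), along the lines of Halberstam--Richert \cite{HR74} and Halberstam--Heath-Brown--Richert \cite{HHR81}. Set $z = X^{\tau/b}$ as the sifting level and $y = X^{\tau/a}$ as the upper boundary for the weight range. The hypotheses $1 \le a < 3 < a+5 < b \le 8$ are chosen so that $\log X/\log z = b/\tau$ and $\log X/\log y = a/\tau$ lie in the range where the explicit piecewise formulas of Lemmas \ref{F=/} and \ref{f=/} supply $F$- and $f$-values at every point we will need. I would form a weighted sum
$$
T = \sum_{(n, P(z) B) = 1} a_n \Bigg( 1 - \sum_{\substack{p \mid n \\ z \le p < y}} w(p) \Bigg),
$$
where $w(p)$ is a normalized Richert weight. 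A combinatorial check of the usual type then shows that $W_n > 0$ together with $n \ll X$ forces the number of prime divisors of $n$ outside $B$ to be at most $r - 1$: splitting $n = p_1 \cdots p_s \cdot m$ with $p_i \in [z, y)$ and $m$ composed of primes $\ge y$, positivity of the weight bounds $s$, while $\log m \le \log X - \sum \log p_i$ bounds the prime factor count of $m$, and the two contributions combine to the leading term $b/((b-a)\tau) - 1$ of \eqref{35r>}.

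The sum $T$ is then estimated by two applications of the Rosser--Iwaniec linear sieve with the level of distribution \eqref{def/tau}. The principal term $\sum a_n \chi_{(n, P(z) B) = 1}$ is bounded below by $X V(z) f(b) (1 + o(1))$, where $V(z) = \prod_{p < z}(1 - \omega(p)/p)$, via the linear sieve lower bound with $s = b$. The subtracted weighted term $\sum_p w(p) \sum_{p \mid n,\, (n/p, P(z)B)=1} a_n$ is estimated from above by the linear sieve upper bound combined with Buchstab's identity and partial summation, producing an expression of the form $X V(z) \cdot c \cdot \int F(b-t)\,(\cdot)\,\tfrac{dt}{t}$. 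Substituting the piecewise formulas $F_1, F_2, F_3$ of Lemma \ref{F=/} over the subintervals where $b - t$ lies in $[1,3]$, $[3,5]$, $[5,7]$ produces exactly the three integrals $I_1, I_2, I_3$ of the proposition, the $2e^\gamma$ prefactor arising from the $2e^\gamma/s$ shape in Lemma \ref{F=/}. Dividing by the lower bound $f(b)$ from Lemma \ref{f=/} then yields the coefficient $\tfrac{2e^\gamma}{f(b)}(I_1+I_2+I_3)$ in \eqref{35r>}, and the inequality on $r$ is precisely the condition ensuring both $T > 0$ and that each contributing $n$ lies in $P_r(B)$.

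The main obstacle is the precise combinatorial matching between the Richert weight and the arithmetic count of prime factors: one must pick $w(p)$ so that the threshold $b/((b-a)\tau) - 1$ appears with no slack in the bound on the number of prime divisors, and simultaneously verify that the $4^{\nu(d)}$-weighted sieve remainders still fall under the level of distribution \eqref{def/tau}. A secondary technical point is the bookkeeping in the piecewise integration: the natural breakpoints $b-3$, $b-5$, $b-7$ of the $F$-iteration must match the stated limits $3, 5, b-1$ and $2, t-1, t-3, u+2$ appearing in $I_1, I_2, I_3$. The chain of inequalities $1 \le a < 3 < a+5 < b \le 8$ is exactly what guarantees that each of these subranges contributes and that $F$ and $f$ are only evaluated within their explicit domains from Lemmas \ref{F=/} and \ref{f=/}.
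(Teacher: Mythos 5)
Your proposal sets out to rebuild a Richert-type weighted sieve from scratch (choose weights $w(p)$, do the combinatorial count of prime factors, then estimate the weighted sum by upper and lower linear-sieve bounds), but the paper does not need any of this: Proposition \ref{Prop35} is proved by a direct application of Lemma \ref{Weight} with $\kappa=1$ (legitimate by the remark following that lemma, citing Diamond--Halberstam), taking $u=\frac{b}{(b-a)\tau}$ and $v=\frac{b}{\tau}$, checking \eqref{check}, and then evaluating the resulting integral $\int_1^{b-a}F(b-s)\bigl(\frac{1}{s}-\frac{1}{b-a}\bigr)\,ds$ by splitting at $b-5$ and $b-3$ and inserting the piecewise formulas of Lemma \ref{F=/}. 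The part of your argument that is described concretely --- the piecewise evaluation producing $I_1$, $I_2$, $I_3$ with the prefactor $2e^\gamma$ and the division by $f(b)$ --- does match the paper's computation of $J_1$, $J_2$, $J_3$; but that is the routine part of the proof.

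The genuine gap is that the weighted-sieve inequality itself, i.e.\ exactly the statement of Lemma \ref{Weight} (admissibility of $r>\tau\mu u-1+\frac{1}{f(\tau v)}\int_1^{v/u}F(\tau v-s)(1-\frac{u}{v}s)\frac{ds}{s}$ with the lower bound \eqref{nPrBabgg}), is what your sketch would have to prove and does not: you explicitly defer the choice of $w(p)$, the ``no slack'' combinatorial matching giving the leading term $\frac{b}{(b-a)\tau}-1$, and the control of the $4^{\nu(d)}$-weighted remainders under \eqref{def/tau} as ``main obstacles,'' whereas these are the entire content of the Diamond--Halberstam theorem the paper invokes. Moreover, your parameterization is not obviously consistent with the claimed threshold: in the sieve underlying Lemma \ref{Weight} the parameter $a$ enters only through $v/u=b-a$, and the natural weight cutoff is $X^{1/u}=X^{\tau(b-a)/b}$, which differs from your $y=X^{\tau/a}$ unless $b=a(b-a)$; with $y=X^{\tau/a}$ nothing in the sketch shows that positivity of the weight plus $n\ll X^{\tau\mu}=X$ yields exactly the bound $\frac{b}{(b-a)\tau}-1$ on the count of prime factors, and an off-by-constant error here would change the final values of $r$ in Theorem \ref{Thm1}. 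Either carry out the full Richert-weight construction with the correct cutoffs (in effect reproving Lemma \ref{Weight} for $\kappa=1$), or, as the paper does, simply apply Lemma \ref{Weight} with the stated $u,v$ and reduce the proposition to the integral computation.
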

	
	\begin{proof}
		We apply Lemma \ref{Weight} with $\kappa=1$ that is allowed by the remark immediately after the statement 
		Lemma \ref{Weight}. 
		Thus we have $\alpha_1=\beta_1=2, \tau\mu=1$. It follows that 
		for any two real numbers $u$ and $v$ satisfying
		\begin{equation}\label{check}
		\frac{1}{\tau}<u \leq v,\quad \tau v>2,
		\end{equation}
		we have
		$$
		\sum_{n \in P_r(B)} a_n \gg X \prod_{p<X^{1 / v}}\bigg(1-\frac{\omega(p)}{p}\bigg)
		$$
		provided only that
		$$
		r>u-1+\frac{1}{f(\tau v)} \int_1^{v / u} F(\tau v-s)\bigg(1-\frac{u}{v} s\bigg) \frac{d s}{s} .
		$$
		For $a, b$ satisfying \eqref{def/ab}, we let 
		$$
		u=\dfrac{b}{(b-a)\tau},\quad v=\dfrac{b}{\tau}, 
		$$
		so that \eqref{check} holds. Hence we have the estimate \eqref{nPrBabgg} provided that 
		\begin{equation}\label{r>2nd}
		r>\dfrac{b}{(b-a)\tau}-1+\frac{1}{f(b)} \int_1^{b-a} F(b-s)\bigg(\frac{1}{s}-\frac{1}{b-a}\bigg) ds .
		\end{equation}
		To compute the integral in \eqref{r>2nd}, we split it into three parts, so that 
		\begin{eqnarray*} 
			\int_1^{b-a} F(b-s)\bigg(\frac{1}{s}-\frac{1}{b-a}\bigg) ds
			=\int_1^{b-5} + \int_{b-5}^{b-3}+ \int_{b-3}^{b-a}
			= J_1+J_2+J_3, 
		\end{eqnarray*} 
		say. We apply the expressions for $F(s)$ in Lemma \ref{F=/} to compute the three integrals above, getting 
		$$
		\begin{aligned}
		&J_3=\int_{b-3}^{b-a} \frac{2 e^\gamma}{b-s}\bigg(\frac{1}{s}-\frac{1}{b-a}\bigg) ds,\\
		&J_2=\int_{b-5}^{b-3} \frac{2 e^\gamma}{b-s}\bigg(\frac{1}{s}-\frac{1}{b-a}\bigg) ds+2 e^\gamma\int_{3}^{5}\dfrac{1}{t}\bigg(\dfrac{1}{b-t}-\dfrac{1}{b-a}\bigg)dt\int_{2}^{t-1}\dfrac{\log(u-1)}{u}du, \\
		&J_1=\int_{1}^{b-5} \frac{2 e^\gamma}{b-s}\bigg(\frac{1}{s}-\frac{1}{b-a}\bigg) ds+2 e^\gamma\int_{5}^{b-1}\dfrac{1}{t}\bigg(\dfrac{1}{b-t}-\dfrac{1}{b-a}\bigg)dt\int_{2}^{t-1}\dfrac{\log(u-1)}{u}du\\
		&\quad +2 e^\gamma\int_{5}^{b-1}\dfrac{1}{t}\bigg(\dfrac{1}{b-t}-\dfrac{1}{b-a}\bigg)dt
		\int_{2}^{t-3}\dfrac{\log(u-1)}{u}du\int_{u+2}^{t-1}\dfrac{1}{v}\log\dfrac{v-1}{u+1}dv. 
		\end{aligned}
		$$
		Finally it is easy to check that 
		$$
		\int_{1}^{b-a} \frac{1}{b-s}\bigg(\frac{1}{s}-\frac{1}{b-a}\bigg) ds
		=\dfrac{1}{b}\log\dfrac{(b-1)(b-a)}{a}-\dfrac{1}{b-a}\log\dfrac{b-1}{a}. 
		$$
		Collecting everything back into \eqref{r>2nd}, 
		we get the assertion of the lemma.  
	\end{proof}
	
	\section{Proof of the Theorems}
	Finally we give the proof of the Theorems \ref{Thm1} and \ref{Thm2}.  
	
	\subsection{Proof of Theorem \ref{Thm1}}
	We apply Proposition \ref{Prop35}, and recall that now $\kappa=1$. The level $\tau$ in \eqref{def/tau} 
	in the present situation can be deduced from Lemma \ref{fO/dens}; in particular 
	\eqref{anTd} implies that 
	$$
	\tau=\frac{1}{4}-\frac{\theta}{2}=\frac{25}{128}. 
	$$ 
	Now we choose $a=1, b=6.6$ in \eqref{def/ab}, so that \eqref{nPrBabgg} and \eqref{35r>} now 
	take the form 
	\begin{equation}\label{Pr/Bab}
	\sum_{n \in P_r(B)} a_n \gg X \prod_{p<X^{\tau / b}}\bigg(1-\frac{\omega(p)}{p}\bigg)
	\end{equation}
	with 
	\begin{equation}\label{Thm11r>}
	r>\dfrac{1056}{175}-1+\dfrac{2e^{\gamma}}{f(6.6)}(I_1+I_2+I_3), 
	\end{equation}
	where
	$$
	\begin{aligned}
	&I_1=\dfrac{115}{924}\log\dfrac{28}{5}<0.21442,\\
	&I_2=\int_{3}^{5.6}\dfrac{1}{t}\bigg(\dfrac{1}{6.6-t}-\dfrac{1}{5.6}\bigg)dt\int_{2}^{t-1}\dfrac{\log(u-1)}{u}du<0.05558,\\
	&I_3=\int_{5}^{5.6}\dfrac{1}{t}\bigg(\dfrac{1}{6.6-t}-\dfrac{1}{5.6}\bigg)dt
	\int_{2}^{t-3}\dfrac{\log(u-1)}{u}du\int_{u+2}^{t-1}\dfrac{1}{v}\log\dfrac{v-1}{u+1}dv<0.00001.\\
	\end{aligned}
	$$
	By Lemma \ref{f=/} we also have 
	$$
	\dfrac{2e^{\gamma}}{f(6.6)}<3.5623.
	$$
	Therefore \eqref{Thm11r>} becomes 
	$r > 5.996$, and hence $r=6$ is acceptable. 
	With $r=6$, the above \eqref{Pr/Bab} becomes 
	\begin{equation}\label{Pr/Bab/+}
	\sum_{n \in P_6(B)} a_n \gg X \prod_{p<X^{3/100}}\bigg(1-\frac{\omega(p)}{p}\bigg)
	\gg \frac{X}{\log X}. 
	\end{equation}
	This proves the theorem unconditionally. 
	
	Now we handle the conditional case. Selberg's eigenvalue conjecture means that $\theta=0$, and hence 
	we have $\tau=1/4$. Choosing $a=1$ and $b=7$, we have 
	\begin{equation}\label{/Chm11r>}
	r>\dfrac{14}{3}-1+\dfrac{2e^{\gamma}}{f(7)}(I_1+I_2+I_3), 
	\end{equation}
	where
	$$
	\begin{aligned}
	&I_1=\dfrac{5}{42}\log 6<0.21331,\\
	&I_2=\int_{3}^{6}\dfrac{1}{t}\bigg(\dfrac{1}{7-t}-\dfrac{1}{6}\bigg)dt\int_{2}^{t-1}\dfrac{\log(u-1)}{u}du<0.07015,\\
	&I_3=\int_{5}^{6}\dfrac{1}{t}\bigg(\dfrac{1}{7-t}-\dfrac{1}{6}\bigg)dt\int_{2}^{t-3}\dfrac{\log(u-1)}{u}du
	\int_{u+2}^{t-1}\dfrac{1}{v}\log\dfrac{v-1}{u+1}dv<0.00003. 
	\end{aligned}
	$$
	By Lemma \ref{f=/} we also have 
	$$
	\frac{2e^{\gamma}}{f(7)}<3.5622.
	$$
	Therefore \eqref{/Chm11r>} becomes $r>4.676$. Hence $r=5$ is acceptable, and a formula like \eqref{Pr/Bab/+} 
	but with $r=6$ replaced by $r=5$ holds. This proves Theorem \ref{Thm1}. 
	
	\subsection{Proof of Theorem \ref{Thm2}} 
	The idea is similar to that of Theorem \ref{Thm1}, and so we may be brief and just indicated the differences.  
	The first difference is that in the present situation we have $\kappa=2$, and therefore we apply the non-linear sieves in 
	Lemma \ref{Weight} to prove Theorem \ref{Thm2}. 
	
	The second difference is that, instead of computing the functions 
	$F_{\kappa}(s)$ and $f_{\kappa}(s)$ for $s$ in various intervals, we use 
	the following estimate: For general $0<\tau\leq 1$
	and $\kappa>1$, and any $0<\zeta< \beta_\kappa$, put
	$$
	\tau u=1+\zeta-\f{\zeta}{\beta_\kappa}, \quad
	\tau v=\f{\beta_\kappa}{\zeta}+\beta_\kappa-1.
	$$
	Then
	$$
	\frac{\kappa}{f_{\kappa}(\tau v)}\int_{1}^{v/u}F_{\kappa}(\tau v-s)\bigg(1-\frac{u}{v}s\bigg)\dfrac{ds}{s}
	\le (\kappa+\zeta)\log \frac{\beta_{\kappa}}{\zeta}-\kappa+\zeta\dfrac{\kappa}{\beta_{\kappa}}. 
	$$
	This follows from Halberstam and Richert \cite[(10.1.10), (10.2.4), and (10.2.7)]{HR74}. 
	We remark that this approach has also been used in \cite{LS10}, and also in \cite{KO12} and \cite{KH15}. 
	
	Specifying $\kappa=2$, the above inequality becomes 
	$$ 
	\frac{2}{f_{2}(\tau v)}\int_{1}^{v/u}F_{2}(\tau v-s)
	\bigg(1-\dfrac{u}{v}s\bigg)\frac{ds}{s}\le (2+\zeta)\log\dfrac{\beta_2}{\zeta}-2+\zeta\dfrac{2}{\beta_2}. 
	$$
	Note that $\beta_2\approx 4.266450$ by \cite[Appendix III on p.345]{DiaHalRic}. Thus a sufficient 
	condition for Lemma \ref{Weight} is 
	\begin{equation}\label{Thm2r>}
	r>(1+\zeta)\mu-1+(2+\zeta)\log\dfrac{\beta_2}{\zeta}-2+\zeta\dfrac{2-\mu}{\beta_2}=:m(\zeta),
	\end{equation}
	say. Unconditionally we have $\tau=25/128$ as before, and therefore $\mu=2/\tau=10.24$. 
	It follows that 	
	$$
	\min_{0<\zeta<\beta_2} m(\zeta)=m(0.19214\cdots)\approx 15.6327.
	$$ 
	Assuming Selberg's eigenvalue conjecture, one has $\tau=1/4$ and therefore $\mu=8$. Hence 
	$$
	\min_{0<\zeta<\beta_2} m(\zeta)=m(0.23556\cdots)\approx 13.0287.
	$$
	In conclusion one can take $r=16$ or $14$ unconditionally or conditionally. This proves Theorem \ref{Thm2}. 
	
	\medskip 
	\noindent 
	{\bf Acknowledgements}. The authors would like to thank Wenjia Zhao for stimulating discussions, and the anonymous referee for helpful suggestions. 
	The authors are supported by the National Key Research and Development 
	Program of China (No. 2021YFA1000700), and the National Natural Science Foundation of China (Nos. 12031008 \& 123B2001). 
	
	\medskip

\end{document}